\newtheorem{theorem}{Theorem}[section]
\newtheorem{corollary}[theorem]{Corollary}
\newtheorem{proposition}[theorem]{Proposition}
\newtheorem{question}{Question}
\theoremstyle{definition}
\newtheorem{definition}[theorem]{Definition}
\theoremstyle{remark}
\newtheorem{remark}{Remark}
\def\cdots{\mathinner{\cdotp\cdotp\cdotp}}
\def\dist{\text{dist}}
\def\o{\scr \sigma(K)}
\def\i{i_{\mathcal{CB}}(K)}
\newcommand{\e}{\varepsilon}
 \newcommand{\F}{\mathcal F}
 \newcommand{\M}{\mathcal M}
\newcommand{\N}{\ensuremath{\mathbb{N}}}
\newcommand{\NN}{\ensuremath{\mathcal{N}}}
\newcommand{\mk}{\medskip}
\newcommand{\isometric}{\ensuremath{\underset{=}{\lhook\joinrel\relbar\joinrel\rightarrow}}}
\newcommand{\lip}{\ensuremath{\underset{Lip}{\lhook\joinrel\relbar\joinrel\rightarrow}}}
\newcommand{\clip}{\ensuremath{\underset{C-Lip}{\lhook\joinrel\relbar\joinrel\rightarrow}}}
\newcommand{\onelip}{\ensuremath{\underset{1-Lip}{\lhook\joinrel\relbar\joinrel\rightarrow}}}
\newcommand{\uptwolip}{\begin{sideways}$\ensuremath{\underset{2-\textrm{Lip}}{\lhook\joinrel\relbar\joinrel\relbar\joinrel\relbar\joinrel\relbar\joinrel\rightarrow}}$\end{sideways}}
\newcommand{\updonelip}{\begin{sideways}$\ensuremath{\underset{D_1-\textrm{Lip}}{\lhook\joinrel\relbar\joinrel\relbar\joinrel\relbar\joinrel\relbar\joinrel\rightarrow}}$\end{sideways}}
\newcommand{\updklip}{\begin{sideways}$\ensuremath{\underset{D_k-\textrm{Lip}}{\lhook\joinrel\relbar\joinrel\relbar\joinrel\relbar\joinrel\relbar\joinrel\rightarrow}}$\end{sideways}}
\newcommand{\updwlip}{\begin{sideways}$\ensuremath{\underset{D_{\omega}-\textrm{Lip}}{\lhook\joinrel\relbar\joinrel\relbar\joinrel\relbar\joinrel\relbar\joinrel\rightarrow}}$\end{sideways}}
\newcommand{\upisometric}{\begin{sideways}$\ensuremath{\underset{=}{\lhook\joinrel\relbar\joinrel\relbar\joinrel\relbar\joinrel\rightarrow}}$\end{sideways}}
\newcommand{\almost}{\ensuremath{\underset{a.i.}{\lhook\joinrel\relbar\joinrel\rightarrow}}}
\begin{document}

\title[On the $\scr C(K)$-distortion of classes of metric spaces]
{A topological obstruction for small-distortion embeddability into spaces of continuous functions on countable compact metric spaces}
\author{Florent Baudier}
\address{Texas A\&M University}
\email{florent@math.tamu.edu}


\begin{abstract} We give the first lower bound on the $\scr C(K)$-distortion of the class of separable Banach spaces, for $K$ a countable compact
in the family $\{ [0,\omega],[0,\omega\cdot2],\cdots, [0,\omega^2], \cdots, [0,\omega^k\cdot n],\cdots,[0,\omega^\omega]\}$.
\end{abstract}

\maketitle{}        

\section{introduction}

\subsection{Motivation}\ \\
In this paper we study a natural question from the Nonlinear Geometry of Banach spaces, namely: What is the best possible distortion when embedding bi-Lipschitzly a separable Banach space into a space $\scr C(K)$, the Banach space of continuous functions on some countable compact metric space $K$? Our motivation is to compute the best distortion allowed when embedding {\it all} separable Banach or metric spaces. We recall briefly a few classical embedding results involving $\scr C(K)$-spaces. Back in 1906, Fr\'echet observed \cite{Frechet1906} that every separable metric space admits an isometric embedding into the space $\ell_\infty(\N)$. An easy application of the Hahn-Banach theorem gives a linear isometric embedding of every separable Banach space into $\ell_\infty(\N)$. This result can be cast as an embedding result into a $\scr C(K)$-space since $\ell_\infty(\N)$ is actually the space $\scr C(\beta\N)$ where $\beta\N$ denotes the Stone-Cech compactification of $\N$. $\beta\N$ is an uncountable compact space. $\ell_\infty(\N)$ being non separable $\beta\N$ cannot be metrizable.The Banach-Mazur theorem \cite{BanachMazur33} asserts that every separable Banach space admits an isometric embedding into the space $\scr C([0,1])$. Note that $[0,1]$ equipped with its canonical distance is an uncountable metric compact.
With the help of Fr\'echet's embedding it is easily seen that every separable metric space can be isometrically embedded into $\scr C([0,1])$.
In 1974 in \cite{Aharoni1974}, Aharoni proved that the $c_0^+$-distortion of every separable metric space is less than $6$. In the same paper he also proved that the $c_0$-distortion of $\ell_1$ is at least $2$. A few years later Assouad \cite{Assouad1978} showed that $c_0^+$-distortion of every separable metric space is $3$. The fact that there is an bi-Lipschitz embedding  with distortion exactly $3$ and that this value is optimal for embeddings into $c_0^+$ is due to Pelant \cite{Pelant1994}. Finally the end of the story regarding embedding into $c_0$ was settled when Kalton and Lancien \cite{KaltonLancien2008} recently constructed an embedding with distortion $2$ (resp. 1) for every separable (resp. proper) metric space. The theory discussed so far is clearly isometric and despite $c_0$ is not isometric to a $\scr C(K)$-space for any compact $K$, embeddings into $c_0$ are related to those into $\scr C(K)$-spaces. Indeed $c_0$ is an hyperplan of the space of convergent sequences, sometimes denoted $c$, which can be seen as the space $\scr C(K)$
where $K=\gamma\N$ the Alexandrov-compactification of $\N$,  also called the one-point compactification of $\N$. Moreover it is easy to show that whenever $K$ is an infinite (not necessarily metric) compact Hausdorff space, $\scr C(K)$ contains a subspace isometric to $c_0$ (see \cite{AlbiacKalton2006} Proposition 4.3.11). 

\mk

From now on we will essentially consider countable compact metric spaces. Every countable compact metric space can be identify with a closed interval of ordinal numbers equipped with the order topology. As usual an ordinal $\beta$ is identified with the interval $[0,\beta)=\{\alpha; \alpha<\beta\}$ and $\beta+1$ with the compact $[0,\beta]$. For instance $c=\scr C([0,\omega])$, $\omega$ being the first infinite countable ordinal. We will consider the nested family of countable compact intervals: $$[0,\omega]\subset[0,\omega\cdot 2]\subset\cdots\subset[0,\omega^2]\subset\cdots\subset[0,\omega^n\cdot k]\subset\cdots\subset[0,\omega^\omega]$$

It follows from a theorem of Borsuk \cite{Borsuk1933} that the collection of $\scr C(K)$-spaces associated to this nested collection has the property that $\scr C(K_1)$ embeds linearly isometrically into $\scr C(K_2)$ whenever $K_1\subset K_2$. Our previous discussion can be summarized in the following self-explanatory diagram where $(M,d)$ (resp. $(X,\|\cdot\|)$) denotes any separable metric space (resp. separable Banach space):
\begin{tiny}
$$\begin{array}{ccccccccc}
c_0&\isometric&\scr C([0,\omega])&\isometric\cdots\isometric&\scr C([0,\omega^k])&\isometric\cdots\isometric&\scr C([0,\omega^\omega])&\isometric &\scr C([0,1])\\
\uptwolip& &\updonelip & & \updklip & & \updwlip& &\upisometric\\
(M,d)      & & (M,d)      &  &   (M,d)    & & (M,d)     & &(X,\|\cdot\|)\\
\end{array}$$
\end{tiny}

Whereas the best distortion achievable in the two extreme cases is completely understood as of now, essentially no estimates for the values of the distortions $D_1,\cdots, D_k,\cdots, D_\omega$ are known besides the upper bound $2$ which can be deduced from Kalton and Lancien embedding. It is worth noting that since subspaces of $\scr C(K)$ (for $K$ countable) are $c_0$-saturated, the subspace structure of $c_0$ allows us to conclude that $\scr C(K)$ cannot be a {\it linearly} isometrically universal space for the class of separable Banach spaces. And it cannot be an isometrically universal space either since Godefroy and Kalton
\cite{GodefroyKalton2003} proved that if a separable Banach space $X$ embeds isometrically into a Banach space $Y$, then $Y$ contains an isometric linear copy of $X$. In this paper we will prove that $D_k\ge\frac{k+1}{k}$ (in particular $D_2=2$) and we will discuss the case of $D_\omega$.

\mk

To estimate $D_k$ from below we exhibit a connection between a topological property of the compact $K$ and the $\scr C(K)$-distortion of a particular family of separable metric spaces. Roughly speaking if the compact is too small then the $\scr C(K)$-distortion of those metric spaces cannot be too small either. The notion of smallness that we use is related to the size of the Cantor-Bendixson derivatives of the compact. In the next two short sections we fix the notation to be used.

\subsection{$\scr C(K)$-distortion of fundamental classes of metric spaces}
Let $\M$ and $\NN$ be two metric spaces. Define the {\it distortion} of $f\colon \M\to \NN$ to be
$$ \dist(f):= \|f\|_{Lip}\|f^{-1}\|_{Lip}=\sup_{x\neq y \in
\M}\frac{d_\NN(f(x),f(y))}{d_\M(x,y)}.\sup_{x\neq y \in
\M}\frac{d_\M(x,y)}{d_\NN(f(x),f(y))}.$$ If the distortion of $f$ is finite, $f$ is said to be a {\it bi-Lipschitz embedding}. The notation $\M\lip \NN$ means that $\M$ bi-Lipschitzly embeds into $\NN$. If dist$(f)\leq C$, one will use the notation $\M \clip \NN$. Denote 

$$c_{\NN}(\M)=\inf\{\dist(f); f\colon \M\lip \NN\}$$

Let $\F$ be a collection of metric spaces and define

$$c_{\NN}(\F)=\sup\{c_{\NN}(\M); \M\in \F\}$$

In this paper the parameter $c_{\NN}(\F)$, called the $\NN$-distortion of the class $\F$, is studied for the following classes and spaces:

\begin{itemize}

\item $\F$ is either the class $\mathcal{SEP}$ or $\textrm{SEP}$, where 

$$\mathcal{SEP}:=\{\M; \M\textrm{ separable metric space}\}$$

$$\textrm{SEP}:=\{X; X\textrm{ separable Banach space}\}$$

\item $\NN$ is $\scr C(K)$ for some infinite metric compact $K$.
\end{itemize} 
\subsection{Cantor-Bendixson derivation}

\begin{definition}
Let $K$ be a compact topological space and define $K'$, the Cantor-Bendixson derivative of $K$ by:
$$K'=K\backslash\left\{x\in K: x\ \textrm{is an isolated point in}\ K\right\}.$$ 
$K'$ is the subset of all accumulation points of $K$.
\end{definition}

\begin{remark}
If $K$ is metrisable then $$K'=\left\{x\in K:\exists (x_n)_{n\in\N}\subset K\setminus\{x\}\textrm{ such that }x_n\to_{n\to\infty} x\right\}.$$
\end{remark}

By transfinite induction one can define derivatives of higher order.

\medskip

\begin{itemize}
\item $K^{(0)}=K$, $K^{(1)}=K'$\\
\item if $\alpha$ is a successor ordinal ($\alpha=\beta+1$), $K^{(\beta+1)}=(K^{(\beta)})'$,\\
\item if $\alpha$ is a limit ordinal, $K^{(\alpha)}=\bigcap_{\beta<\alpha}K^{(\beta)}$.\\
\end{itemize}

Let us briefly recall some classical properties of the Cantor-Bendixson derivation.
\begin{proposition}
Let $K$ be a compact metric space, then
\begin{enumerate}
 \item $K$ is finite $\Longleftrightarrow$ $K'=\emptyset$ $\Longleftrightarrow$ $K$ is discrete.
 \item $K$ is countable $\Longleftrightarrow$ $\exists\ \alpha<\omega_1$ such that $K^{(\alpha)}=\emptyset$.
 \item $K$ is uncountable $\Longleftrightarrow$ $\exists\ \alpha<\omega_1$ such that $K^{(\alpha+1)}=K^{(\alpha)}\neq\emptyset$.
\end{enumerate}
\end{proposition}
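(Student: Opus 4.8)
The plan is to reduce everything to three standard facts, used as black boxes, plus two transfinite inductions. The facts are: (i) every derived set $K^{(\alpha)}$ is closed in $K$ --- whence compact, complete and separable --- which I would prove by transfinite induction, the successor step because the set of isolated points of a space is open, the limit step because intersections of closed sets are closed; (ii) a discrete subspace of a separable metric space is countable (cover its points by pairwise disjoint balls), so each layer $K^{(\beta)}\setminus K^{(\beta+1)}$, being the set of isolated points of $K^{(\beta)}$, is countable; and (iii) the classical fact that a nonempty perfect subset of a complete metric space is uncountable. I would also record once and for all the identity $K\setminus K^{(\delta)}=\bigcup_{\beta<\delta}\bigl(K^{(\beta)}\setminus K^{(\beta+1)}\bigr)$, proved by transfinite induction on $\delta$ (the only point to check is that at a limit stage a point leaves the sequence at a successor step).

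Part (1) is then immediate: if $K$ is finite then each point is isolated --- take the minimum of its distances to the other points --- so $K'=\emptyset$; $K'=\emptyset$ says exactly that every point is isolated, i.e. that $K$ is discrete; and a discrete compact space is finite because the open cover by singletons admits a finite subcover. This closes the cycle of implications.

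The one genuinely non-formal step is a stabilization lemma: for any compact metric $K$ there is a countable ordinal $\alpha$ with $K^{(\alpha+1)}=K^{(\alpha)}$ (and hence $K^{(\beta)}=K^{(\alpha)}$ for all $\beta\ge\alpha$). I would prove it using second countability: the open sets $K\setminus K^{(\alpha)}$ increase with $\alpha$, and whenever the increase is strict one can choose a member of a fixed countable base that lies inside the larger open set but not inside the smaller one; assigning to each strict-increase ordinal the least-indexed such basic set is injective, so there are only countably many strict increases, and by regularity of $\omega_1$ the sequence is constant from some countable ordinal onward. Getting this injection and the appeal to the regularity of $\omega_1$ exactly right is, I expect, the only place that requires real care.

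With the stabilization lemma in hand, (2) and (3) are bookkeeping. For (2): if $K^{(\alpha)}=\emptyset$ for some $\alpha<\omega_1$, the displayed identity exhibits $K$ as a union of at most countably many countable layers, so $K$ is countable; conversely, if $K$ is countable, stabilization gives $\alpha<\omega_1$ with $K^{(\alpha)}=K^{(\alpha+1)}$, and if $K^{(\alpha)}$ were nonempty it would be a nonempty perfect subset of the complete space $K$, hence uncountable by (iii), a contradiction, so $K^{(\alpha)}=\emptyset$. For (3): if $K$ is uncountable then by (2) $K^{(\alpha)}\neq\emptyset$ for every $\alpha<\omega_1$, so the ordinal produced by the stabilization lemma satisfies $K^{(\alpha+1)}=K^{(\alpha)}\neq\emptyset$; conversely, if $K^{(\alpha+1)}=K^{(\alpha)}\neq\emptyset$ for some $\alpha<\omega_1$ then $K^{(\alpha)}$ is a nonempty perfect subset of $K$, hence uncountable by (iii), and therefore so is $K$.
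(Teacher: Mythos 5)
Your argument is correct. Note, however, that the paper offers no proof of this proposition at all: it is stated as a recollection of ``classical properties'' of the Cantor--Bendixson derivation, so there is no in-paper argument to compare against. What you have written is the standard proof of the Cantor--Bendixson theorem, and all the load-bearing steps check out: the derived sets $K^{(\alpha)}$ are closed (hence complete), each layer $K^{(\beta)}\setminus K^{(\beta+1)}$ is a discrete subspace of a separable metric space and therefore countable, the injection into a countable base correctly bounds the number of ordinals at which the derivation strictly decreases, and the perfect-set fact finishes both (2) and (3). One small remark: for the stabilization lemma you do not really need the regularity of $\omega_1$ --- since only countably many $\alpha$ satisfy $K^{(\alpha+1)}\neq K^{(\alpha)}$ and $\omega_1$ is uncountable, some $\alpha<\omega_1$ with $K^{(\alpha+1)}=K^{(\alpha)}$ exists outright, and monotonicity then gives constancy beyond it; regularity is only needed if you insist on locating the \emph{least} such $\alpha$ as a bound for the whole countable set of strict decreases. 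Also, when you invoke the perfect-set theorem, it is worth saying explicitly that $K^{(\alpha)}$ is closed in the compact (hence complete) space $K$, so that it is itself a complete perfect space; a merely dense-in-itself subset (like $\mathbb{Q}$ in $\mathbb{R}$) would not do. You do have this from your fact (i), so the proof is complete.
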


\section{Estimating the $\scr C(K)$-distortion from below}
\subsection{A lower bound on the $\scr C(K)$-distortion for countable metric spaces}
Aharoni proved that $c_{c_0}(\textrm{SEP})\ge 2$, hence $c_{c_0}(\scr{MET})\ge 2$. Indeed he showed that the separable Banach space $\ell_1$ does not embed into $c_0$ with distortion strictly less than $2$. 
Actually a careful inspection of his proof shows that the proof and the statement of the result can be carried out and stated without mentioning and using the linear structure of the Banach space $\ell_1$. This simple but crucial observation allows us to extend Aharoni's proof to the much more general setting of embeddings into $\scr C(K)$-spaces. 

\mk

For $k\ge 0$, denote $\N^{[k]}$ the set of all subsets of $\N$ with exactly $k$ elements, and $\N^{\le k}=\bigcup_{i=0}^k \N^{[k]}$. By convention $\N^{[0]}=\emptyset$. Equip $\N^{\le k}$ with the symmetric difference metric $d_\Delta$, i.e.  $d_\Delta(\sigma,\tau):=|\sigma\bigtriangleup \tau|$, $\sigma,\tau\in \N^{\le k}$ (the cardinality of the symmetric difference). For instance, $d_\Delta(\{1,4,6,100\},\{4,6, 33\})=3$.

\mk

One shall denote $\Delta_{\le k}(\N)$ the metric space $(\N^{\le k},d_\Delta)$. $\Delta_{\le k}(\N)$ is a $2k$-bounded, $1$-separated, countable non-proper metric space. The set of elements of $\Delta_{\le k}(\N)$ can be identified with the rooted countably branching tree of height $k$, whose root is the empty set. However the metric $d_\Delta$ is not the classical tree metric with which the countably branching tree is usually endowed. Denote by $T_{1,2}^2(\N)$ the subset of the countably branching tree of height $2$ consisting of the first two elements of the first level and their successors. Formally $T_{1,2}^2=\{\{1\},\{2\},\{1,n\},\{2,m\}; n,m\in\N\}$. The following theorem is nothing but Aharoni's lower bound theorem reformulated in purely metric terms. We include the original proof using our notation for the sake of completeness and hoping that it will make the notation used in the proof of Theorem \ref{index} more accessible.  
\begin{theorem}[Aharoni]
The metric space $T_{1,2}^2(\N)$ does not embed into $c_0$ with distortion strictly less than $2$.
\end{theorem}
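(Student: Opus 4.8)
The plan is to reason by contradiction: suppose $f\colon T_{1,2}^2(\N)\to c_0$ is an embedding with $\dist(f)<2$. After rescaling we may assume that $d_\Delta(\sigma,\tau)\le\|f(\sigma)-f(\tau)\|_\infty< 2\,d_\Delta(\sigma,\tau)$ for all $\sigma,\tau$ (more precisely $\|f\|_{Lip}=1$ and $\|f^{-1}\|_{Lip}<2$, so there is a fixed $\lambda<2$ with the gap bounded by $\lambda$). The points in play are the two first-level vertices $\{1\},\{2\}$ (at mutual $d_\Delta$-distance $2$) and their successors $\{1,n\}$, $\{2,m\}$, where $d_\Delta(\{1\},\{1,n\})=1$, $d_\Delta(\{1,n\},\{2,m\})=4$ when $n\ne m$, but $d_\Delta(\{1,n\},\{2,n\})=2$. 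So the combinatorial heart is: the "twin" pairs $\{1,n\},\{2,n\}$ are close (distance $2$), while "non-twin" cross pairs are far (distance $4$), and this must be reflected coordinatewise in $c_0$.

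First I would exploit the defining property of $c_0$: for each vertex $v$, $f(v)\in c_0$, so only finitely many coordinates of $f(v)$ are large. Consider $a=f(\{1\})$ and $b=f(\{2\})$; since $\|a-b\|_\infty\ge 2$, fix a coordinate $j_0$ with $|a(j_0)-b(j_0)|\ge 2-\delta$ for a small $\delta$ chosen with $\lambda+\delta<2$. Now for each $n$, $\|f(\{1,n\})-a\|_\infty<\lambda<2$ and $\|f(\{2,n\})-b\|_\infty<\lambda<2$, so on coordinate $j_0$ the value $f(\{1,n\})(j_0)$ stays within $\lambda$ of $a(j_0)$ and $f(\{2,n\})(j_0)$ stays within $\lambda$ of $b(j_0)$; hence $f(\{1,n\})(j_0)$ and $f(\{2,n\})(j_0)$ differ on coordinate $j_0$ by at least $2-\delta-2(\lambda-1)$... — here the numerics need care, and this is exactly where the sharp constant $2$ enters. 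The cleaner route, which is Aharoni's, is the following: for the twin pair we need some coordinate witnessing distance close to $2$; for all non-twin pairs we need some (possibly different) coordinate witnessing distance close to $4$, i.e.\ at least $4-\delta>2$. The key point is that a coordinate witnessing a value $>2$ for $f(\{1,n\})-f(\{2,m\})$ cannot be "shared" across infinitely many $n$ or $m$, because $f(\{1,n\})\to$ nothing but each coordinate functional $\sigma\mapsto f(\sigma)(j)$ applied to the infinitely many points $\{1,n\}$ must, for the image to lie in $c_0$ and the map to be Lipschitz... — actually the right statement is a pigeonhole/finiteness argument: for the single point $a=f(\{1\})$, the set $\{j:|a(j)|>\eta\}$ is finite; combined with $\|f(\{1,n\})-a\|<\lambda$ one controls where $f(\{1,n\})$ can be large uniformly in $n$, and similarly for the $\{2,m\}$ side near $b$.

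Concretely, I would argue: choose $\delta>0$ with $2\lambda+\delta<4$ (possible since $\lambda<2$). Let $F=\{j:\ |a(j)|\ge 2\lambda \text{ or } |b(j)|\ge 2\lambda\}$, a finite set. For $j\notin F$ and any $n,m$ we have $|f(\{1,n\})(j)-f(\{2,m\})(j)|\le |f(\{1,n\})(j)-a(j)|+|a(j)|+\cdots$, which one bounds by $2\lambda+2\lambda<4$... again the precise threshold must be tuned, but the upshot is: \emph{outside a finite set of coordinates $F$, no coordinate can witness the distance $4$ between a non-twin pair} (for $\delta$ small enough, "witness" meaning the coordinate difference is $\ge 4-\delta$, which we need because $\|f(\{1,n\})-f(\{2,m\})\|_\infty\ge d_\Delta=4$ forces \emph{some} coordinate $\ge 4$, hence $\ge 4-\delta$ trivially, but we want it forced to lie in $F$ by making the outside-$F$ bound strictly below $4$). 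So for every pair $(n,m)$ with $n\ne m$ there is a coordinate $j(n,m)\in F$ with $|f(\{1,n\})(j)-f(\{2,m\})(j)|\ge 4-\delta$. Since $F$ is finite, by pigeonhole there is a single coordinate $j^\ast\in F$ and infinitely many $n$ with (passing to a subsequence also infinitely many $m\ne n$) $|f(\{1,n\})(j^\ast)-f(\{2,m\})(j^\ast)|\ge 4-\delta$; but the sequence $n\mapsto f(\{1,n\})(j^\ast)$ is a bounded real sequence (bounded by $\|a\|_\infty+\lambda$) and similarly for $m\mapsto f(\{2,m\})(j^\ast)$, so along a further subsequence both converge, to limits $\alpha,\beta$ with $|\alpha-\beta|\ge 4-\delta$. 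On the other hand, for the \emph{twin} pairs $\{1,n\},\{2,n\}$ we have $\|f(\{1,n\})-f(\{2,n\})\|_\infty<2\lambda<4-\delta$ in particular $|f(\{1,n\})(j^\ast)-f(\{2,n\})(j^\ast)|<2\lambda$; taking $n\to\infty$ along the subsequence where the relevant limits exist for indices of the form $m=n$ forces $|\alpha-\beta|\le 2\lambda<4-\delta$, a contradiction.

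The main obstacle I anticipate is bookkeeping the constants and the double use of pigeonhole/subsequences so that the \emph{same} coordinate $j^\ast$ simultaneously handles a non-twin pair $(\{1,n\},\{2,m\})$ (giving a big difference $\ge 4-\delta$) and the twin pair $(\{1,n\},\{2,n\})$ or $(\{1,m\},\{2,m\})$ (forced small $<2\lambda$) along a common subsequence — one must diagonalize carefully, e.g.\ first fix $j^\ast\in F$ attained infinitely often, extract an infinite set $A$ of "first-level indices $n$" and for each $n\in A$ an index $m_n\in A$, $m_n\ne n$, with the big gap at $j^\ast$; then pass to subsequences of $A$ along which $f(\{1,n\})(j^\ast)$, $f(\{2,n\})(j^\ast)$, and $f(\{2,m_n\})(j^\ast)$ all converge, and compare the limits. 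The inequality $\dist(f)<2$ is used exactly once but crucially, to make $2\lambda<4-\delta$, i.e.\ to separate "twin-close" from "non-twin-far" after the coordinate-wise distortion is accounted for twice; this is why the bound $2$ is sharp and cannot be pushed below. I would present the argument in this order: (1) normalize and fix $\lambda<2$, $\delta$; (2) define the finite "bad coordinate" set $F$ from $f(\{1\}),f(\{2\})$ and show non-twin distances are witnessed only inside $F$; (3) pigeonhole to a single coordinate $j^\ast$ and a contradiction via convergent subsequences against the twin-pair bound.
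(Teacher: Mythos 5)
Your proposal is correct in substance, and its second half takes a genuinely different route from the proof in the paper. The two arguments share the same opening: normalize so that $d_\Delta(\sigma,\tau)\le\|f(\sigma)-f(\tau)\|_\infty\le \lambda\, d_\Delta(\sigma,\tau)$ with $\lambda<2$, use $d_\Delta(\{1,n\},\{2,m\})=4$ for $n\ne m$ to produce, for each such pair, a coordinate witnessing a difference close to $4$, and use membership in $c_0$ to confine all these witnesses to one fixed finite set of coordinates. They then diverge. The paper takes that finite set to be $\mathcal{X}_{1,2}=\{j:\ |f(\{1\})_j-f(\{2\})_j|\ge 4-2\lambda\}$ (finite because $f(\{1\})-f(\{2\})\in c_0$), observes that at the witness coordinate the \emph{first-level} images $f(\{i\}),f(\{j\})$, $i,j\ge 3$, are $(4-2\lambda)$-separated, and concludes by exhibiting a bounded, separated, infinite sequence in the finite-dimensional span of $(e_j)_{j\in\mathcal{X}_{1,2}}$ --- note that this uses the points $\{i\}$, $i\ge3$, and $\emptyset$, which are not literally in $T^2_{1,2}$ as the paper defines it. You instead stay inside $T^2_{1,2}$, pigeonhole the witnesses down to a single coordinate $j^{\ast}$, and play the distance-$4$ non-twin pairs against the distance-$2$ twin pairs $(\{1,n\},\{2,n\})$ via convergent subsequences; this also closes, and it is arguably more faithful to the stated theorem. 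Two repairs are needed, both of which you flag yourself: (1) the threshold $2\lambda$ in your definition of $F$ does not work (off $F$ you only get $|f(\{1,n\})(j)-f(\{2,m\})(j)|<6\lambda$, which is not $<4$); take instead $F=\{j:\ |f(\{1\})(j)|\ge\eta\ \mathrm{or}\ |f(\{2\})(j)|\ge\eta\}$ with $0<\eta<2-\lambda$, so that off $F$ the coordinate difference is at most $2\lambda+2\eta<4$; (2) the pigeonhole must produce a \emph{single} infinite index set that serves simultaneously as the set of $n$'s and of $m$'s --- e.g.\ color the ordered pairs $(n,m)$ by their witness in the finite set $F$ and apply infinite Ramsey --- before extracting the convergent subsequences, since otherwise the twin bound $|f(\{1,n\})(j^{\ast})-f(\{2,n\})(j^{\ast})|\le 2\lambda$ and the non-twin bound $\ge 4-\epsilon$ need not involve the same limits. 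With these adjustments your argument is complete; what it buys is an explicit use of the metric contrast between twin and non-twin pairs, whereas the paper's version is the one that generalizes directly to the iterated Cantor--Bendixson argument of Theorem \ref{index}.
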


\begin{proof} 
Let $f:T_{1,2}^2(\N)\to c_0$ satisfying $$d_\Delta(\sigma,\tau)\le\Vert f(x)-f(y)\Vert_{\infty}\le Cd_\Delta(\sigma,\tau),\hskip1cm \sigma,\tau\in T_{1,2}^2(\N)$$
\noindent Without loss of generality one can assume that $f(\emptyset)=0$. If $C<2$, define for every $i\neq j$, $$\scr X_{i,j}:=\{n\in\N : \vert f(\{i\})_n-f(\{j\})_n\vert\ge 4-2C\}.$$

$\scr X_{i,j}$ are finite sets. Moreover, for every $i,j\ge 3$, $i\neq j$, $\scr X_{1,2}\bigcap \scr X_{i,j}\neq \emptyset$. Indeed, $$\Vert f(\{1,i\})-f(\{2,j\})\Vert_{\infty}\ge d_\Delta(\{1,i\},\{2,j\})=4$$
Hence there exists $n_{i,j}\in\N$ such that $$\vert f(\{1,i\})_{n_{i,j}}-f(\{2,j\})_{n_{i,j}}\vert\ge 4.$$ Therefore, $$\begin{array}{rl}
\vert f(\{i\})_{n_{i,j}}-f(\{j\})_{n_{i,j}}\vert \ge &\vert f(\{1,i\})_{n_{i,j}}-f(\{2,j\})_{n_{i,j}}\vert-\vert f(\{1,i\})_{n_{i,j}}-f(\{i\})_{n_{i,j}}\vert\\
 & -\vert f(\{2,j\})_{n_{i,j}}-f(\{j\})_{n_{i,j}}\vert\\
 & \\ 
\ge & 4-\Vert f(\{1,i\})-f(\{i\})\Vert_{\infty}-\Vert f(\{2,j\})-f(\{j\})\Vert_{\infty}\\
&  \\
 \ge & 4-Cd_\Delta(\{1,i\},\{i\})-Cd_\Delta(\{2,j\},\{j\})=4-2C.\\
\end{array}$$
This proves that $n_{i,j}\in \scr X_{i,j}$. Arguing along the same lines one gets that $n_{i,j}\in \scr X_{1,2}$ as well. Therefore $\scr X_{1,2}\bigcap \scr X_{i,j}\neq\emptyset$ whenever $i\neq j$, $i,j\ge 3$. Denote $P$ the canonical projection from $c_0$ onto the closed linear span generated by the vectors $(e_n)_{n\in \scr X_{1,2}}$. The sequence $\left(P f(\{n\})\right)_{n=3}^{\infty}$ is a $C$-bounded and $(4-2C)$-separated sequence in a finite-dimensional Banach space; a contradiction. Indeed, 
\mk

\noindent for every $n\ge 3$,

\begin{align*}
\|P f(\{n\})\|_\infty\le\|f(\{n\})\|_\infty=\|f(\{n\})-f(\emptyset)\|_\infty\le Cd_\Delta(\{n\},\emptyset)=C
\end{align*}
and for every $i\neq j$, $i,j\ge 3$

\begin{align*}
\|P f(\{i\})-P f(\{j\})\|_\infty& =\sup_{n\in \scr X_{1,2}}|f(\{i\})_n-f(\{j\})_n|\\
&\ge |f(\{i\})_{n_{i,j}}-f(\{j\})_{n_{i,j}}|\\
& \ge 4-2C>0\\
\end{align*}

\end{proof}

Inspired by the reformulation of Aharoni's proof in terms of a metric subset of $\Delta_{\le 2}(\N)$ we establish a link between the $\scr C(K)$-distortion of the sequence $(\Delta_{\le k}(\N))_{k\ge 1}$ and the Cantor-Bendixson index of the compact $K$.
 
\begin{theorem}\label{index} Let $K$ be a compact space and an integer $k\ge 2$.\\
If $\Delta_{\le k}(\N)$ admits a bi-Lipschitz embedding into $\scr C(K)$ with distortion strictly less than $\frac{k}{k-1}$ then $K^{(k-1)}$ is infinite.
\end{theorem}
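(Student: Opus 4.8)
The plan is to mimic Aharoni's argument for $T_{1,2}^2(\N)\not\clip[<2] c_0$, but iterated $k-1$ times through the Cantor–Bendixson tower, contradicting the assumption that $K^{(k-1)}$ is finite. Suppose $f\colon \Delta_{\le k}(\N)\to\scr C(K)$ with $d_\Delta(\sigma,\tau)\le\|f(\sigma)-f(\tau)\|_\infty\le C\,d_\Delta(\sigma,\tau)$ and $C<\tfrac{k}{k-1}$; normalize $f(\emptyset)=0$. Fix a small gauge $\delta>0$ with $2k-2C\cdot 1>\delta$ or more precisely a threshold of the form $2k-C(2k-2)=2k-(2k-2)C>0$ governing the final separation, and for each pair $\sigma\prec\tau$ at consecutive levels introduce the set $Z_{\sigma,\tau}=\{t\in K:\ |f(\sigma)(t)-f(\tau)(t)|\ge \eta\}$ for a suitable $\eta$; since $f(\sigma)-f(\tau)\in\scr C(K)$ and $\scr C(K)$-functions that exceed $\eta$ in modulus do so on a \emph{clopen-like} set — really: the set where $|g|\ge\eta$ is closed, and the ``witness'' structure we need is that for each leaf $\sigma\in\N^{[k]}$ there is a point $t\in K$ with $|f(\sigma)(t)-f(\sigma')(t)|$ large for the appropriate ancestor comparisons. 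The role played by finite subsets of $\N$ in Aharoni's proof will be played here by: the set of witnessing points $t$ for a given branch lies in $K^{(j)}$ after $j$ levels of the construction.

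Concretely, I would build, level by level, a collection of points of $K$ and show it must meet $K^{(k-1)}$. Start at the top: for two leaves $\sigma,\tau$ that share the first $k-1$ coordinates but differ in the last (so $d_\Delta(\sigma,\tau)=2$) one gets a witness $t$ with $|f(\sigma)(t)-f(\tau)(t)|\ge 2$; pushing this down through the common ancestors exactly as in the displayed triangle-inequality computation of Aharoni's proof (subtracting $C\cdot 2$ for each pair of edges traversed) yields, for the root-level vectors, a residual separation $\ge 2k-(2k-2)C>0$ by the hypothesis $C<\tfrac{k}{k-1}$. The key combinatorial point is that the witness point $t$ for the comparison at level $j$ must be an accumulation point of witness points at level $j-1$: because there are infinitely many choices of the $j$-th coordinate, and each gives a witness in the same ``threshold'' set near $t$, so $t\in K^{(1)}$ relative to the level-$(j-1)$ witness set; iterating, the final root-level witnesses live in $K^{(k-1)}$. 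Then the sequence $\bigl(P_{k-1}f(\{n\})\bigr)_n$ — where $P_{k-1}$ restricts attention to $K^{(k-1)}$, which by assumption is finite, hence $\scr C(K^{(k-1)})$ is finite-dimensional — is bounded (by $C\cdot 1$) and $\ge(2k-(2k-2)C)$-separated, impossible in finite dimensions; contradiction.

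The main obstacle — and the step I expect needs the most care — is making rigorous the passage ``witness at level $j$ forces accumulation of witnesses at level $j-1$, hence membership in $K^{(j)}$.'' In the $c_0$ case this is transparent: ``finite subset of $\N$'' intersections force a common coordinate, and the spaces $\ell_\infty(S)$ for finite $S$ are finite-dimensional. For general $\scr C(K)$ one must replace ``finitely supported'' by a topological compactness/accumulation argument: given infinitely many coordinates $n$, each producing a point $t_n\in K$ with a definite lower bound on $|f(\cdot)(t_n)-f(\cdot)(t_n)|$, extract (by compactness of $K$) a convergent subnet $t_n\to t$; continuity of the finitely many relevant functions $f(\sigma)$ transfers the lower bound to $t$, and distinctness of the $t_n$ forces $t\in K'$. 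Threading this through all $k-1$ levels, while keeping track of which finite family of functions must be controlled at each stage and ensuring the thresholds $\eta$ at successive levels are chosen so the inequalities close up (each level loses a controlled amount $\le 2C-2+\text{something}$, and after $k-1$ levels the surviving gap is still positive precisely because $C(k-1)<k$), is the technical heart of the proof. One also has to check the base case $k=2$ reproduces Aharoni's theorem (here $K'$ infinite), and to be mildly careful that the embedding is of $\Delta_{\le k}(\N)$, not merely of one branch, so that all the auxiliary vertices $\sigma$ at intermediate levels are genuinely available with the stated metric.
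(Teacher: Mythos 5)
Your proposal takes the same route as the paper's proof: seed a large separation from far-apart leaves of $\Delta_{\le k}(\N)$, transfer it to the singletons by the triangle inequality with residual threshold $\eta=2k-2(k-1)C>0$, climb the Cantor--Bendixson tower one derivative per level, and finish with a bounded, $\eta$-separated sequence over a finite index set. But two steps in your sketch need repair. The lesser one is the seed comparison: two leaves sharing their first $k-1$ coordinates have $d_\Delta(\sigma,\tau)=2$, which only yields a witness with $|f(\sigma)(t)-f(\tau)(t)|\ge 2$, and (since such leaves have the \emph{same} first coordinate) gives nothing at all about root-level separation; pushing a gap of $2$ through ancestors by the triangle inequality only degrades it and cannot produce the threshold $2k-(2k-2)C$ you quote. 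The correct seed, exactly as in Aharoni's inequality $\Vert f(\{1,i\})-f(\{2,j\})\Vert_\infty\ge 4$, is a pair of \emph{disjoint} $k$-sets $\sigma=\{n^1_{i_1},\dots,n^k_{i_k}\}$, $\tau=\{n^1_{j_1},\dots,n^k_{j_k\vphantom{1}}\}$ built from $k$ mutually disjoint index sequences, so that $d_\Delta(\sigma,\tau)=2k$ while $d_\Delta(\{n^r_{i_r}\},\sigma)=k-1$; one then loses $C(k-1)$ twice and obtains a single point $\beta$ at which all $k$ singleton pairs are $\eta$-separated simultaneously.

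The genuine gap is the clause ``distinctness of the $t_n$ forces $t\in K'$'': distinctness is precisely what has to be proved, not assumed. A priori the witnesses attached to the infinitely many pairs $(i,j)$ in the last coordinate could all be one and the same point of $K$, in which case there is no accumulation point and no passage to $K'$. The missing idea is that the finite-dimensionality argument must be invoked at \emph{every} level, not only at the final one: if $\Gamma$ denotes the set of witnesses collected at a given stage, the maps $i\mapsto \bigl(f(\{n^k_i\})(\gamma)\bigr)_{\gamma\in\Gamma}$ form a $C$-bounded, $\eta$-separated infinite sequence in $\ell_\infty(\Gamma)$, which forces $\Gamma$ to be infinite; only then does compactness of $K$ give an accumulation point of $\Gamma$, which lies in $K'$ and still satisfies the remaining $k-1$ threshold conditions because the sets $\{\beta: |f(\{n^r_{i_r}\})(\beta)-f(\{n^r_{j_r}\})(\beta)|\ge\eta\}$ are closed. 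With that lemma inserted at each of the $k-1$ stages, your concluding contradiction over $K^{(k-1)}$ is exactly the paper's.
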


\begin{proof}

Let $f\colon \Delta_{\le k}(\N)\to \scr C(K)$ such that $$d_\Delta(\sigma,\tau)\le\Vert f(\sigma)-f(\tau)\Vert_{\infty}\le Dd_\Delta(\sigma,\tau),\ \sigma, \tau\in \Delta_{\le k}(\N)$$ One can assume that $f({\emptyset})=0$. Let $(n_i^1)_{i\ge 1}, \cdots, (n_i^k)_{i\ge 1}$ be $k$ sequences of mutually distinct elements in $\N$, i.e. $n_i^r=n_j^s$ if and only if $i=j$ and $r=s$ (for instance take $n_i^r=k(i-1) + r-1$). Remark that $\eta:=2k-2(k-1)D>0$ if $D<\frac{k}{k-1}$ and define for $1\le r\le k$ and $i,j\ge 1$

$$\scr X_{i,j}^r:=\left\{ \beta\in K: \vert f(\{n_i^r\})(\beta)-f(\{n_j^r\})(\beta)\vert\ge \eta\right\}$$
 
 \mk
 
\noindent CLAIM 1: For all $i_1\neq j_1,\cdots, i_k\neq j_k$ one has $\scr{X}_{i_1,j_1}^1\bigcap\cdots\bigcap\scr{X}_{i_k,j_k}^k\neq\emptyset$ 

\mk

\noindent Proof of CLAIM 1:
Denote $\sigma=\{n_{i_1}^1, \cdots,n_{i_k}^k\}$ and $\tau=\{n_{j_1}^1, \cdots,n_{j_k}^k\}$. When $i_1\neq j_1,\cdots, i_k\neq j_k$, $\sigma\Delta\tau=\emptyset$ hence $\|f(\sigma)-f(\tau)\|_{\infty}\ge 2k$ and there exists $\beta\in K$ such that

\begin{equation}\label{e1}
  \vert f(\sigma)(\beta)-f(\tau)(\beta)\vert\ge 4                                                         
\end{equation}

and for all $1\le r\le k$

\begin{equation}\label{e2}
     |f(\{n_{i_r}^r\})(\beta)-f(\{n_{j_r}^r\})(\beta)|\ge \eta>0.
    \end{equation}

Indeed, \begin{align*}
|f(\{n_{i_r}^r\})(\beta)-f(\{n_{j_r}^r\})(\beta)| \ge &| f(\sigma)(\beta)-f(\tau)(\beta)|-| f(\{n_{i_r}^r\})(\beta)-f(\sigma)(\beta)|\\
 & -\vert f(\tau)(\beta)-f(\{n_{j_r}^r\})(\beta)|\\
 & \\ 
\ge & 2k-\|f(\{n_{i_r}^r\})-f(\sigma)\|_{\infty}-\| f(\{n_{j_r}^r\})-f(\tau)\|_{\infty}\\
 &  \\
 \ge & 2k-Dd_\Delta(\{n_{i_r}^r\},\sigma)-Dd_\Delta(\{n_{j_r}^r\},\tau)\\
 &  \\
 \ge & 2k-2D(k-1)=\eta>0.\\
\end{align*}
 
\noindent CLAIM 2: The set $\scr{X}_{i_1,j_1}^1\bigcap\cdots\bigcap\scr{X}_{i_{k-1},j_{k-1}}^k$ is infinite,  for all $i_1\neq j_1,\cdots, i_{k-1}\neq j_{k-1}$.

\mk

\noindent Proof of CLAIM 2: Fix $i_1\neq j_1,\cdots, i_{k-1}\neq j_{k-1}$ and denote $\scr{B}:=\scr{X}_{i_1,j_1}^1\bigcap\cdots\bigcap\scr{X}_{i_{k-1},j_{k-1}}^k$. It follows from CLAIM 1 that for every $i\neq j$ there exists an element $\beta_{i,j}\in \scr B\bigcap \scr X_{i,j}^k$. Let $\Gamma_k:=\bigcup_{i\neq j}(\scr{B}\bigcap \scr{X}_{i,j}^k)\subset K$ and consider $$\begin{array}{rcl}
   \Phi_k:\N & \to & \ell_{\infty}(\Gamma_k)\\
i & \mapsto &  \left( f(\{n_i^k\})(\gamma)\right)_{\gamma\in \Gamma_k}\\
  \end{array}$$

$(\Phi_k(i))_{i\ge1}$ is a $D$-bounded and $\eta$-separated sequence. Indeed, 

\mk

\noindent for every $i\ge 1$,

\begin{align*}
\|\Phi_k(i)\|_\infty\le\|f(\{n_i^k\})\|_\infty=\|f(\{n_i^k\})-f(\emptyset)\|_\infty\le Dd_\Delta(\{n_i^k\},\emptyset)=D
\end{align*}

and for every $i\neq j$,

\begin{align*}
\|\Phi_k(i)-\Phi_k(j)\|_\infty& =\sup_{\gamma\in \Gamma_k}|f(\{n_i^k\})(\gamma)-f(\{n_j^k\})(\gamma)|\\
&\ge |f(\{n_i^k\})(\beta_{i,j})-f(\{n_j^k\})(\beta_{i,j})|\\
& \ge \eta>0\\
\end{align*}

It follows from a classical compactness argument that $\Gamma$ must be infinite and consequently $\scr{B}\supset \Gamma$ is infinite. 

\mk

\noindent By definition, 

$$K^{'}\bigcap\scr X_{i,j}^r:=\left\{ \beta\in K^{'}: \vert f(\{n_i^r\})(\beta)-f(\{n_j^r\})(\beta)\vert\ge \eta\right\}$$

\mk

\noindent CLAIM 3: For all $i_1\neq j_1,\cdots, i_{k-1}\neq j_{k-1}$ one has $$K^{'}\bigcap\scr{X}_{i_1,j_1}^1\bigcap\cdots\bigcap\scr{X}_{i_{k-1},j_{k-1}}^k\neq\emptyset$$

\noindent Proof of CLAIM 3: Fix $i_1\neq j_1,\cdots, i_{k-1}\neq j_{k-1}$. It follows from CLAIM 2 that $\scr{B}:=\scr{X}_{i_1,j_1}^1\bigcap\cdots\bigcap\scr{X}_{i_{k-1},j_{k-1}}^k\subset K$ is infinite hence there is an accumulation point $\beta\in K^{'}$. But $\beta=\lim \beta_n$ with $\beta_n\in \scr B$ and for every $1\le r\le k-1$ since $f(\{n_{i_r}^r\})$ and $f(\{n_{j_r}^r\})$ are continuous maps on $K$, $\beta\in\scr{X}_{i_r,j_r}^r$ which proves the claim.

\mk

Repeating the previous procedure (CLAIM 1 and CLAIM 2) we can show that $K^{'}\bigcap\scr{X}_{i_1,j_1}^1\bigcap\cdots\bigcap\scr{X}_{i_{k-2},j_{k-2}}^k$ is infinite for all $i_1\neq j_1,\cdots, i_{k-2}\neq j_{k-2}$. After $k-1$ such operations we get that 
$K^{(k-1)}\bigcap\scr{X}_{i_1,j_1}^1\neq\emptyset$ for all $i_1\neq j_1$ and we conclude as follow. 

Let $\Gamma_1:=\bigcup_{i\neq j}(K^{(k-1)}\bigcap\scr{X}_{i,j}^1)\subset K^{(k-1)}$. Consider $$\begin{array}{rcl}
   \Phi_1:\N & \to & \ell_{\infty}(\Gamma_1)\\
i & \mapsto &  \left( f(n_i^1)(\gamma)\right)_{\gamma\in \Gamma_1}\\
  \end{array}$$

$(\Phi_1(i))_{i\ge 1}$ is again a sequence that is $D$-bounded and $\eta$-separated, hence $\Gamma_1$ must be infinite. We deduce that $K^{(k-1)}$ is infinite. 

\end{proof}

\subsection{Application to the $\scr C(K)$-distortion for the class of separable Banach spaces}
Let $\Delta_{\omega}(\N)$ the set of all finite subsets of $\N$ equipped with the symmetic difference distance. It is clear that $\Delta_{\omega}(\N)$ contains isometric copies of all the metric spaces $\Delta_{\le k}(\N)$. Also remark that the map $f\colon \Delta_{\omega}(\N)\to \ell_1$ such that $f(\{n_1,\cdots, n_r\})=\sum_{i=1}^r e_{n_i}$ is an isometric embedding of
$\Delta_{\omega}(\N)$ into $\ell_1$. We could now argue along the following lineS to prove that distortions strictly less than $2$ are forbidden when embedding $\ell_1$ into $c_0$. $C([0,\omega])$ contains a subspace isometric to $c_0$. Since $\ell_1$ contains an isometric copy of $\Delta_2(\N)$, it follows from Theorem \ref{index} that $\ell_1$ does not admit bi-Lipschitz embedding into $c_0$ with a distortion strictly smaller than $2$ otherwise it will embed into $C([0,\omega])$ with the same distortion.

We say that $\M$ embeds {\it almost isometrically} into $\NN$ ($\M\almost \NN$) if for every $\epsilon>0$ there exist a bi-Lipschitz embedding $f$ from $\M$ into $\NN$ with $\textrm{dist}(f)\leq 1+\epsilon$

\begin{corollary}\label{ai}
If $\Delta_{\omega}(\N)\almost C(K)$ then $K^{(\omega)}\neq\emptyset$. In particular if $\scr C(K)$ is an almost isometric universal space for the class of separable Banach spaces then $K^{(\omega)}\neq\emptyset$. 
\end{corollary}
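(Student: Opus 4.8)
The plan is to reduce to Theorem \ref{index} one integer $k$ at a time, letting the distortion tend to $1$, and then to glue the resulting information across all finite $k$ by a compactness argument inside $K$.

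First I would fix an integer $k\ge 2$ and choose $\e>0$ with $1+\e<\tfrac{k}{k-1}$ (possible since $\tfrac{k}{k-1}>1$). Assuming $\Delta_{\omega}(\N)\almost C(K)$, there is an embedding $f\colon \Delta_{\omega}(\N)\to C(K)$ with $\dist(f)\le 1+\e$. Since $\Delta_{\le k}(\N)$ is literally an isometric subspace of $\Delta_{\omega}(\N)$, the restriction $f|_{\Delta_{\le k}(\N)}$ is a bi-Lipschitz embedding of $\Delta_{\le k}(\N)$ into $C(K)$ with $\dist\bigl(f|_{\Delta_{\le k}(\N)}\bigr)\le 1+\e<\tfrac{k}{k-1}$. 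Theorem \ref{index} then yields that $K^{(k-1)}$ is infinite, in particular nonempty. Letting $k$ range over $\{2,3,4,\dots\}$ gives $K^{(n)}\ne\emptyset$ for every finite $n\ge 1$.

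Next I would recall that for any compact space $K$ the Cantor--Bendixson derivatives form a decreasing transfinite chain of \emph{closed} subsets of $K$: the set of isolated points of a topological space is open, so each $K^{(\alpha+1)}$ is closed in $K^{(\alpha)}$ and hence in $K$, while the limit stages are intersections of closed sets. Thus $\{K^{(n)}\}_{n<\omega}$ is a nested family of nonempty closed subsets of the compact space $K$, so by the finite intersection property $\bigcap_{n<\omega}K^{(n)}\ne\emptyset$; since $\omega$ is a limit ordinal this intersection is by definition $K^{(\omega)}$, whence $K^{(\omega)}\ne\emptyset$. For the ``in particular'' clause I would use the isometric embedding $\{n_1,\dots,n_r\}\mapsto\sum_{i=1}^{r}e_{n_i}$ of $\Delta_{\omega}(\N)$ into $\ell_1$ noted just above: if $C(K)$ is almost isometrically universal for the separable Banach spaces then $\ell_1\almost C(K)$, and precomposing those embeddings with the isometry gives $\Delta_{\omega}(\N)\almost C(K)$, reducing to the first part.

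The only genuinely non-formal step is the passage from ``$K^{(n)}\ne\emptyset$ for every finite $n$'' to ``$K^{(\omega)}\ne\emptyset$'': this is not automatic and really does use compactness of $K$ (a strictly decreasing chain of nonempty sets need not have nonempty intersection without it). But this is exactly the standard finite-intersection argument, so I do not expect any serious obstacle; the substance of the corollary is entirely carried by Theorem \ref{index} applied with $\e\downarrow 0$.
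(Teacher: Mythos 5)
Your proof is correct and follows essentially the same route as the paper: apply Theorem \ref{index} to the restriction of an almost isometric embedding to each $\Delta_{\le k}(\N)$ with distortion below $\tfrac{k}{k-1}$ to get every finite derivative nonempty, then use the nested-closed-sets/finite-intersection-property argument in the compact $K$ (what the paper calls a ``classical Baire argument'') to conclude $K^{(\omega)}\neq\emptyset$. Your write-up merely spells out the details the paper leaves implicit, including the reduction of the ``in particular'' clause via the isometric copy of $\Delta_{\omega}(\N)$ inside $\ell_1$.
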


\begin{proof}
It follows from Theorem \ref{index} that $K^{(k)}\neq\emptyset$ for every $k<\omega$ hence 
$K^{(\omega)}=\bigcap_{k<\omega} K^{(k)}\neq\emptyset$ by a classical Baire argument.
\end{proof}

The following theorem, of independent interest, can also be used to prove Corollary \ref{ai} in combination with Aharoni's original lower bound estimate. 
\begin{theorem}
If $\ell_1\almost \scr C(K)$ then for all $\alpha<\omega$, $\ell_1\almost \scr C(K^{(\alpha)})$.
\end{theorem}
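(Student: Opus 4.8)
The plan is to exploit the structure of $\ell_1$ together with the almost-isometric hypothesis and Theorem~\ref{index}, passing from a global near-isometry to a ``localized'' one that lives on a Cantor--Bendixson derivative. First I would fix $\e>0$ and a bi-Lipschitz embedding $f\colon\ell_1\to\scr C(K)$ with $\dist(f)\le 1+\e$, normalized so that $f(0)=0$ and $\|x-y\|_1\le\|f(x)-f(y)\|_\infty\le(1+\e)\|x-y\|_1$. Since $\ell_1$ contains an isometric copy of $\Delta_\omega(\N)$ (hence of every $\Delta_{\le k}(\N)$) via $\{n_1,\dots,n_r\}\mapsto\sum e_{n_i}$, one can run the machinery inside the proof of Theorem~\ref{index}: for the $r$-th coordinate block and indices $i\neq j$ set $\scr X_{i,j}^r=\{\beta\in K:|f(\{n_i^r\})(\beta)-f(\{n_j^r\})(\beta)|\ge\eta\}$ with $\eta=2k-2(k-1)(1+\e)$, which is positive once $\e$ is small relative to $k$. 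The iterated CLAIM~1--CLAIM~3 argument then shows not merely that $K^{(k-1)}$ is infinite but, more precisely, that for each $\alpha\le k-1$ the set $K^{(\alpha)}\cap\scr X^1_{i_1,j_1}\cap\cdots\cap\scr X^{k-\alpha}_{i_{k-\alpha},j_{k-\alpha}}$ is infinite; in particular, for fixed $\alpha$ and $k=\alpha+2$ we get infinitely many accumulation-point witnesses inside $K^{(\alpha)}$ separating the points $\{f(\{n\})\}_n$ by at least $\eta$.

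Next I would convert this ``separation inside $K^{(\alpha)}$'' statement into an almost-isometric embedding of $\ell_1$ into $\scr C(K^{(\alpha)})$. The natural device is a compactness/ultrafilter or diagonal extraction: having run the above for a sequence $\e_m\to 0$ and all relevant block-lengths simultaneously, one selects, for each finitely supported $x=\sum a_i e_i\in\ell_1$, points of $K^{(\alpha)}$ at which $f(x)$ nearly realizes its norm; then define $g\colon\ell_1\to\scr C(K^{(\alpha)})$ essentially as a restriction-and-limit of $f$ to a suitably chosen infinite subset of $K^{(\alpha)}$. The key point is that on $K^{(\alpha)}$ the functions $f(\cdot)$ still separate the standard $\ell_1$-basis vectors with the right constants, and by the triangle-inequality estimates (exactly as in the displayed computation in the proof of Theorem~\ref{index}) the restriction to $K^{(\alpha)}$ is automatically $(1+\e)$-Lipschitz; what needs work is the lower bound, i.e. that restricting to $K^{(\alpha)}$ does not collapse distances. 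Here one uses that every unit vector of $\ell_1$ (indeed every finitely supported $x$) can be approached by differences $\{n_i^r\}$-type configurations, so the infinitude of the witness sets forces $\|g(x)-g(y)\|_\infty\ge(1-\e')\|x-y\|_1$ for an error $\e'\to0$.

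I expect the main obstacle to be exactly this last transfer step: Theorem~\ref{index} as stated only outputs \emph{nonemptiness/infinitude} of intersections of the sets $\scr X_{i,j}^r$ on $K^{(k-1)}$, which separates basis vectors, whereas an almost-isometric embedding of all of $\ell_1$ into $\scr C(K^{(\alpha)})$ requires controlling $\|f(x)-f(y)\|$ for arbitrary $x,y$ after restriction. Bridging this gap requires either (a) a more quantitative bookkeeping in the proof of Theorem~\ref{index}, tracking how large a portion of $K^{(\alpha)}$ carries the norm of a general element, or (b) an abstract argument: show that an infinite subset $L\subset K^{(\alpha)}$ on which $f$ separates a ``rich enough'' net of points must in fact be norming for $f(\ell_1)$ up to $\e$, perhaps by a Gamma-limit/extraction passing to an infinite compact $L\subset K^{(\alpha)}$ and replacing $f$ by a pointwise-limit embedding into $\scr C(L)\hookrightarrow\scr C(K^{(\alpha)})$. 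I would pursue route (b), organizing it as: extract an infinite $L\subseteq K^{(\alpha)}$ along which the coordinate functionals $\beta\mapsto f(x)(\beta)$ have a limit for every $x$ in a countable dense subset of $\ell_1$; define $g(x)$ as that limit extended continuously; verify $(1+\e)$-bi-Lipschitz on the dense set from the established $\eta$-separation and the Lipschitz bound; and finally extend $g$ to all of $\ell_1$ by uniform continuity, noting that $\scr C(L)$ embeds isometrically into $\scr C(K^{(\alpha)})$ by the Borsuk/Tietze-type extension cited in the introduction.
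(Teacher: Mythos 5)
Your plan stalls exactly at the obstacle you yourself flag, and the fix you propose in route (b) does not close it. Knowing that some infinite set $L\subseteq K^{(\alpha)}$ separates the images of the basis vectors (which is all the Theorem~\ref{index} machinery yields) does not make $L$ norming for $f(x)-f(y)$ for arbitrary $x,y$: the witnesses produced by that argument depend on which basis vectors you feed in, and there is no reason a single extracted $L$ should see the norm of every difference up to $\e$. Your pointwise-limit construction of $g$ also leaves unaddressed why the limit functions are continuous on $K^{(\alpha)}$ rather than merely bounded on $L$. So as written, the lower Lipschitz bound for general pairs is asserted, not proved.

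The paper bridges the gap with a different, pairwise-localized device, and it only ever passes one derivative at a time. Given a near-isometry $g$ into $\mathcal{C}(K)$ and a fixed pair $x,y$ in $c_{00}$ with disjoint supports and $\|x-y\|_1=\delta$, it perturbs: for indices $i\neq j$ outside the supports it considers $g(x+\delta e_i)$ and $g(y+\delta e_j)$, whose distance is about $3\delta$. A point $\beta_{i,j}$ nearly norming this difference is shown, via the triangle inequality, to simultaneously (i) separate $g(\delta e_i)$ from $g(\delta e_j)$ by roughly $2\delta$ and (ii) satisfy $|g(x)(\beta_{i,j})-g(y)(\beta_{i,j})|\ge (1-2\e')\delta/(1+\e')$. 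Property (i) forces infinitely many distinct witnesses (the $g(\delta e_i)$ form a bounded, separated family), hence an accumulation point $\beta_0\in K'$; property (ii) passes to $\beta_0$ by continuity of the two fixed functions $g(x)$ and $g(y)$. Thus the plain restriction $g|_{K'}$ is already a near-isometry --- no extraction, no limits, no appeal to an extension operator --- and one iterates finitely many times, choosing $\e'$ so that the per-step loss $(1-2\e')/(1+\e')$ stays above $1/(1+\e)$. The essential idea missing from your proposal is that the auxiliary separated family must be attached to the particular pair $(x,y)$ through the perturbations $x+\delta e_i$, $y+\delta e_j$, so that the very points forced into $K'$ by compactness are the ones that norm $g(x)-g(y)$. (Note also that the paper's induction only establishes the lower bound for disjointly supported pairs in $c_{00}$; any reworking of the argument has to decide whether that restricted statement suffices for the claimed conclusion about all of $\ell_1$.)
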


\begin{proof}
Assume that $\ell_1\almost \scr C(K)$, we will prove by induction the following property: 

\mk

\noindent For all $\e>0$ and $\alpha<\omega$ there exists $ f:c_{00}\to \scr C(K^{(\alpha)})$ such that for every $x, y\in c_{00}$, $x$ and $y$ with disjoint supports, $$\frac{\Vert x-y\Vert_1}{1+\e}\le\Vert f(x)-f(y)\Vert_{\infty}\le\Vert x-y\Vert_1,$$

\noindent where $c_{00}$ is the space of sequences with finite support.

\mk

Let $\e>0$. There exists $g\colon c_{00}\to \scr C(K)$ and $\e'$, to be chosen later, such that $$\frac{\Vert x-y\Vert_1}{1+\e'}\le\Vert g(x)-g(y)\Vert_{\infty}\le\Vert x-y\Vert_1.$$ For every $x\in c_{00}$ define, $$f(x)=g(x)_{\vert K'}\in \scr C(K').$$ $K'$ being a subset of $K$ it is clear that $\Vert f(x)-f(y)\Vert_{\infty}\le\Vert x-y\Vert_1$. Let $x,y\in c_{00}$ with disjoint supports such that $\Vert x-y\Vert_1=\Vert x\Vert_1+\Vert y\Vert_1=\delta>0$, for some fixed $\delta$. For $i$ and $j$ not in the union of the supports of $x$ and $y$ we have,
\begin{equation}
       \Vert g(x+\delta e_i)-g(y+ \delta e_j)\Vert_{\infty}\ge\frac{3\delta}{1+\e'}                                              
\end{equation}

Hence there exists $\beta\in K$ such that 
\begin{equation}\label{equa1}
       \vert g(x+\delta e_i)(\beta)-g(y+ \delta e_j)(\beta)\vert\ge\frac{3\delta}{1+\e'}
\end{equation}

\begin{equation}
       \vert g(\delta e_i)(\beta)-g(\delta e_j)(\beta)\vert\ge\frac{(2-\e')\delta}{1+\e'}                                              
\end{equation}

\begin{equation}
       \vert g(x)(\beta)-g(y)(\beta)\vert\ge\frac{(1-2\e')\delta}{1+\e'}=\frac{(1-2\e')\Vert x-y\Vert_1}{1+\e'}                                              \end{equation}

Let $$B=\left\{ \beta\in K:\exists i\neq j\ \textrm{satisfying equation}\ (\ref{equa1})\right\}.$$  By a now classical compactness argument we can show that $B$ is infinite and $K'$ has an element $\beta_0$ such that, by continuity of $g(x)$ and $g(y)$ with $x,y$ fixed, 
$$\vert g(x)(\beta_0)-g(y)(\beta_0)\vert\ge \frac{(1-2\e')\Vert x-y\Vert_1}{1+\e'}.$$

and 

$$\Vert f(x)-f(y)\Vert_{\infty} \ge \frac{(1-2\e')\Vert x-y\Vert_1}{1+\e'}.$$

The property is initialized by choosing $\e'$ such that $\frac{1-2\e'}{1+\e'}>\frac{1}{1+\e}$. Assume that the property holds for an ordinal $\alpha$. Then the property holds for the ordinal $\alpha+1$.  Indeed apply the proof for $\alpha=0$ to the map $f=g_{\vert K^{(\alpha)}}$.
\end{proof}

\section{Conclusion and Questions}

The smallest ordinal $\alpha$ such that $K^{(\alpha)}=\emptyset$ (denoted $\i$ in this paper) is classically called the Cantor-Bendixson index (or rank) of $K$. According to our work it seems more relevant to consider the following index which will be called the {\it Cantor-Bendixson derivation order} of $K$. We will follow the notation from \cite{AlbiacKalton2006} and denote it as follow:
$$\o:=\sup\{\alpha; K^{(\alpha)}\neq\emptyset\}$$
Clearly $\i=\o+1$. For instance we have, $\sigma(\gamma\N)=1$, $\sigma([0,\omega^\alpha\cdot n+r])=\alpha$, $\sigma([0,1])=\infty$ and $\sigma(\beta\N)=\infty$.

\mk

\noindent Clearly $c_{\NN}(\textrm{SEP})\le c_{\NN}(\scr{MET})$ and $c_{\M}(X)\le c_{\NN}(X)$ whenever $\NN\onelip\M$. As of now the situation regarding estimates of the $\scr C(K)$-distortion of the classes of separable metric or Banach spaces is the following:  

\begin{itemize}
\item $c_{\scr C([0,1])}(\textrm{SEP})=c_{\scr C([0,1])}(\mathcal{SEP})=c_{\scr C(\beta\N)}(\textrm{SEP})=c_{\scr C(\beta\N)}(\mathcal{SEP})=1$\\
\item If $K$ is a metric compact with $\o=1$ then $$c_{\scr C(K)}(\textrm{SEP})=c_{\scr C(K)}(\scr{SEP})=2$$
\item If $K$ is a metric compact with $2\le \o<\omega$ then $$\dfrac{\o+1}{\o}\le c_{\scr C(K)}(\textrm{SEP})\le c_{\scr C(K)}(\scr{SEP})\le 2$$
\item If $c_{\scr C(K)}(\textrm{SEP})=1$ then $\o\ge\omega$. In other words if $\scr C(K)$ is an almost isometric universal space for the class of separable Banach spaces then $\o\ge\omega$.
\end{itemize}

\begin{question} What is the exact value of $c_{\scr C(K)}(\textrm{SEP})$ (or $c_{\scr C(K)}(\scr{SEP})$) when $K$ is a metric compact with $2\le \o$?
\end{question}

\begin{question} Characterize the compacts $K$ such that $c_{\scr C(K)}(\textrm{SEP})=1$ (or $c_{\scr C(K)}(\scr{SEP})=1$)? And for which ones $\scr C(K)$ is an isometric universal Banach space for the class of separable Banach spaces?
\end{question}

\begin{bibsection}
\begin{biblist}

\bib{Aharoni1974}{article}{
  author={Aharoni, I.},
  title={Every separable metric space is Lipschitz equivalent to a subset of $c\sp {+}\sb {0}$},
  journal={Israel J. Math.},
  volume={19},
  date={1974},
  pages={284--291},
}

\bib{AlbiacKalton2006}{book}{
  author={Albiac, F.},
  author={Kalton, N. J.},
  title={Topics in Banach space theory},
  series={Graduate Texts in Mathematics},
  volume={233},
  publisher={Springer},
  place={New York},
  date={2006},
  pages={xii+373},
}

\bib{Assouad1978}{article}{
  author={Assouad, P.},
  title={Remarques sur un article de Israel Aharoni sur les prolongements lipschitziens dans $c\sb {0}$ (Israel J. Math. 19 (1974), 284--291)},
  journal={Israel J. Math.},
  volume={31},
  date={1978},
  pages={97--100},
}

\bib{BanachMazur33}{article}{
  author={Banach, S.},
  author={Mazur, S.},
  title={Zur Theorie der linearen Dimension},
  journal={Studia Math.},
  volume={4},
  date={1933},
  pages={100\ndash 112},
}

\bib{Borsuk1933}{article}{
  author={Borsuk, K.},
  title={\"Uber Isomorphie der Funktionalra\"ume},
  journal={Bull. Int. Ac. Pol. Sci.},
  volume={},
  date={1933},
  pages={1\ndash 10},
}

\bib{Frechet1906}{article}{
  author={Fr\'echet, M.},
  title={Sur quelques points du calcul fonctionel},
  journal={Rend. Circ. Mat Palermo Math.},
  volume={22},
  date={1906},
  pages={1\ndash 71},
}

\bib{GodefroyKalton2003}{article}{
  author={Godefroy, G.},
  author={Kalton, N. J.},
  title={Lipschitz-free Banach spaces},
  journal={Studia Math.},
  volume={159},
  date={2003},
  pages={121\ndash 141},
}

\bib{KaltonLancien2008}{article}{
  author={Kalton, N. J.},
  author={Lancien, G.},
  title={Best constants for Lipschitz embeddings of metric spaces into $c_0$},
  journal={Fund. Mat.},
  volume={3},
  date={2008},
  pages={249-272},
}

\bib{Pelant1994}{article}{
  author={Pelant, J.},
  title={Embeddings into $c\sb 0$},
  journal={Topology Appl.},
  volume={57},
  date={1994},
  pages={259--269},
}

\end{biblist}
\end{bibsection}

\end{document}